\date{December 12, 2019}
\title[]{%
Null hypersurfaces 
in Lorentzian manifolds	 
with the null energy condition
}
\author[]{
S.~Akamine, A.~Honda,  M.~Umehara and  
        K.~Yamada 
}   
\address[Shintaro Akamine]{%
Graduate School of Mathematics, 
Nagoya University, Chikusa-ku, Nagoya 464-8602, Japan
}
\email{s-akamine@math.nagoya-u.ac.jp}
\address[Atsufumi Honda]{
Department of Applied Mathematics, 
Faculty of Engineering, Yokohama National University,
79-5 Tokiwadai, Hodogaya, Yokohama 240-8501, Japan
}
\email{honda-atsufumi-kp@ynu.ac.jp}
\address[Masaaki Umehara]{%
   Department of Mathematical and Computing Sciences,
   Tokyo Institute of Technology,
   Tokyo 152-8552, Japan
}
\email{umehara@is.titech.ac.jp}
\address[Kotaro Yamada]{%
   Department of Mathematics,
   Tokyo Institute of Technology,
   Tokyo 152-8551, Japan
}
\email{kotaro@math.titech.ac.jp}
\subjclass[2010]{%
 Primary 53C50;   
 Secondary 
53C42,     
53B30.     
}%
\keywords{%
   light-like hypersurface, 
    null hypersurface, 
 space-time,
   Lorentzian manifold, NEC
}%
\thanks{
The first and second authors were partially supported by the
Grant-in-Aid for Young Scientists No.~19K14527 and
No.~19K14526, respectively.
The last author
was partially supported by 
(B) No.\  17H02839 from Japan Society for the 
Promotion of Science.
}
\theoremstyle{plain}
 \newtheorem{theorem}{Theorem}[section]
 \newtheorem{proposition}[theorem]{Proposition}
 \newtheorem{fact}[theorem]{Fact}
 \newtheorem{lemma}[theorem]{Lemma}
\theoremstyle{definition}
 \newtheorem{definition}[theorem]{Definition}
\theoremstyle{remark}
 \newtheorem*{remark*}{Remark}
\newtheorem{example}[theorem]{Example}
 \newtheorem*{acknowledgement}{Acknowledgement}
\numberwithin{equation}{section}
\newcommand{\op}[1]{{\operatorname{#1}}}
\newcommand{\mb}[1]{\vect{#1}}
\newcommand{\vect}[1]{\boldsymbol{#1}}
\newcommand{\R}{\boldsymbol{R}}
\renewcommand{\phi}{\varphi}
\begin{document}
\maketitle

\begin{abstract}
Let $M^{n+1}_1$ be a light-like geodesically  complete
Lorentzian $(n+1)$-manifold satisfying the null energy condition.
We show that null hypersurfaces
properly immersed in $M^{n+1}_1$
are totally geodesic.
\end{abstract}

\section{Introduction} \label{sec:I} 

We fix a $C^\infty$-differentiable 
Lorentzian $(n+1)-$manifold $(M^{n+1}_1,g_L)$.
A $C^2$-immersion $F\colon\Lambda^n\to M^{n+1}_1$ 
defined on an $n$-manifold $\Lambda^n$
is called {\it null} if
the first fundamental form (i.e. the
induced metric $F^*g_L$) is positive 
semi-definite but not positive 
definite at all points.
Such a null hypersurface $F$
is foliated by light-like geodesics
(see Fact \ref{lem:F}), and can be constructed
locally from a given space-like submanifold $S$
of codimension $2$
in $M^{n+1}_1$.
In fact, there are exactly two light-like direction fields
which are normal to $S$,
and two ruled null hypersurfaces in
$M^{n+1}_1$ are generated by these
direction fields.

A continuous map $F\colon\Lambda^n\to M^{n+1}_1$
is said to be {\it proper} if $F^{-1}(K)$ is 
compact for each compact subset $K$ of $M^{n+1}_1$.
On the other hand,
a Lorentzian manifold $M^{n+1}_1$ is
called {\it light-like geodesically complete} 
if any light-like geodesics of $M^{n+1}_1$ 
can be extended to complete geodesics.
The purpose of this paper is to prove the following:

\medskip
\noindent
{\bf Theorem A.}
{\it
Let $M^{n+1}_1$ $(n\ge 2)$ be
a light-like geodesically complete
Lorentzian $(n+1)$-manifold
satisfying the null energy condition $($NEC\,$)$, 
that is,
the Ricci tensor of $M^{n+1}_1$ 
is non-negative along light-like directions.
Suppose that
$F\colon\Lambda^n\to M^{n+1}_1$ 
is a proper null $C^2$-immersion.
Then, $F$ is totally geodesic
$($that is, any geodesic in $M^{n+1}_1$ which is
tangent to $F(\Lambda^n)$ at a certain point
lies in $F(\Lambda^n))$, and
the Ricci tensor of $M^{n+1}_1$ 
vanishes along null directions of $\Lambda^n$.
}

\medskip
Since the inclusion mappings of
topologically closed embedded hypersurfaces
are proper, Theorem A is a generalization of 
Gutierrez and Olea \cite[Proposition 3.11]{GO}, 
where $M^{n+1}_1$ is assumed to have
a time-like conformal vector field.

We also remark that
Galloway \cite[Theorem IV.1]{Ga2} is 
closely related,
which asserts that, if $M^{n+1}_1$
contains an achronal complete light-like geodesic $L$,
then $L$ lies on a totally geodesic
hypersurface.
Theorem A is independent of it,
since the achronality is not assumed in
Theorem A, however a technique in  \cite{Ga2} is applied
as follows: 
A null immersion $F$ is called
{\it $L$-complete} if 
for each $p\in \Lambda^n$,
there exists a curve $\gamma\colon\R\to \Lambda^n$
passing through $p$ such that $F\circ \gamma$  
gives a complete light-like geodesic
in the ambient space $M^{n+1}_1$
(cf. Definition \ref{def:L}).
We first show that 
the  properness
of $F$ implies
$L$-completeness of $F$ (cf. Theorem \ref{Thm:B}).
We then apply 
the well-known Raychaudhuli equation
using  the map as in \cite[Remark IV.1]{Ga2} 
giving a certain splitting-structure of a light-like
hypersurface (cf. Proposition \ref{prop:Phi}).

A Lorentzian space form is a geodesically complete
Lorentzian manifold of constant sectional curvature.
Since any Lorentzian Einstein manifold 
satisfies the null energy condition,
we obtain the following corollary:

\medskip
\noindent
{\bf Corollary B.}
{\it 
A $C^2$-differentiable null 
hypersurface 
which is  properly immersed 
in a Lorentzian Einstein manifold
$($in particular, in a  Lorentzian space form$)$
is totally geodesic.
}

\medskip
We denote by $\R^{n+1}_1$ the 
Lorentz-Minkowski space of 
signature $(-,+,\dots,+)$.
Since an entire graph in $\R^{n+1}_1$ is
the image  of a proper map,
as an application of Corollary B, 
we obtain the following Bernstein-type theorem:

\medskip
\noindent
{\bf Corollary C}\,(\cite{AUY,AHUY}).
{\it
An entire $C^2$-differentiable light-like graph in $\R^{n+1}_1$ is a 
light-like hyperplane.
}

\medskip
Bejancu, Ferr\'{a}ndez and Lucas 
\cite{BFL} showed the same conclusion 
under the assumption that
the graph has zero light-like mean curvature.
So Corollary~C is a generalization of that.
When $n=2$, this assertion has been proved in
the first, third and fourth authors' previous work \cite{AUY}
applying Hartman-Nirenberg's cylinder theorem.
(In fact, null surfaces in $\R^3_1$
are flat surfaces in the Euclidean 3-space,
as pointed out in \cite{AUY}.)
After that, this was proved in \cite{AHUY}
using a method which is different from the one in this paper. 

\medskip
The paper is organized as follows:
In Section \ref{sec:1}, we recall
fundamental properties of
null hypersurfaces and 
prepare two
key assertions
(Proposition \ref{prop:Phi} and Theorem \ref{Thm:B})
to prove Theorem A.
Main results are proved in Section~\ref{sec:main}. 
Several related examples 
are given at the end of Section 3.

\section{$L$-completeness of null hypersurfaces} \label{sec:1} 

\begin{definition}
A $C^2$-immersion 
$
F\colon\Lambda^n \to M^{n+1}_1
$
is said to be {\it null} or {\it light-like} 
if the induced metric $ds^2:=F^*g_L$ is
degenerate everywhere.
Let $U$ be a domain of $\Lambda^n$.
A {\it null vector field} on  
$
U
$
is a vector field $\xi$ without zeros so that
$$
ds^2(\xi,\mb v)=0 \qquad (\mb v\in T\Lambda^n).
$$
\end{definition}
Here, a null vector field may not be defined on the whole of 
$\Lambda^n$ in general.
However, the following assertion can be proved using well-known 
techniques on covering spaces.

\begin{proposition}\label{prop:O}
If $\Lambda^n$ does not admit a null vector field defined on
$\Lambda^n$, then there exists a double covering 
$\pi:\hat \Lambda^n\to \Lambda^n$ so that  
there is a null vector field of $F\circ \pi$ defined on
$\hat \Lambda^n$.
In particular, if  $\Lambda^n$ is simply connected,
we can take a null vector field defined on $\Lambda^n$.
\end{proposition}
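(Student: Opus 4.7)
The plan is to realize the null directions as a sufficiently regular line subbundle $K$ of $T\Lambda^n$, and then exploit the fact that a real line bundle admits a nowhere-zero global section if and only if it is trivial. When $K$ is not trivial, the orientation double cover of $K$ supplies the desired $\hat\Lambda^n$, on which $\pi^{*}K$ becomes trivial and hence possesses a global nonvanishing section.

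First I would verify that at each $p \in \Lambda^n$, the radical
\[
K_p := \{\xi \in T_p\Lambda^n : ds^2_p(\xi,\cdot) = 0\}
\]
is exactly one-dimensional. This is a pointwise linear-algebra argument: since $dF_p(T_p\Lambda^n)$ is an $n$-dimensional subspace of the $(n+1)$-dimensional Lorentzian space $T_{F(p)}M^{n+1}_1$, its $g_L$-orthogonal complement is one-dimensional, and the null assumption on $F$ forces this complement to lie inside $dF_p(T_p\Lambda^n)$; pulling back by $dF_p$ identifies it with $K_p$. Expressing $ds^2$ locally as a $C^1$ symmetric matrix of constant rank $n-1$, I would then produce $C^1$ local sections of $K$ — for instance from a nonvanishing column of the adjugate of a suitable $(n-1)\times(n-1)$ minor — thereby giving $K$ the structure of a $C^1$ line subbundle of $T\Lambda^n$.

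With $K$ recognized as a line bundle, I would invoke the standard orientation double cover construction, setting
\[
\hat\Lambda^n := \{(p,o) : p \in \Lambda^n,\ o \text{ an orientation of } K_p\}
\]
with the obvious projection $\pi \colon \hat\Lambda^n \to \Lambda^n$. By construction $\pi^{*}K$ is tautologically oriented, hence trivial, so it admits a nowhere-zero section; viewed as a vector field on $\hat\Lambda^n$, this is a null vector field for $F\circ\pi$. If $\Lambda^n$ is simply connected, then every double cover of $\Lambda^n$ is disconnected (isomorphic to two disjoint copies of $\Lambda^n$), so the orientation double cover splits; this forces $K$ itself to be orientable, and one obtains a null vector field directly on $\Lambda^n$. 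The main technical subtlety is establishing the $C^1$ regularity of the line subbundle $K$ from the $C^2$-immersion $F$; once that is in hand, the remainder of the argument is standard covering-space theory.
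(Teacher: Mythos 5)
The paper gives no proof of this proposition, saying only that it ``can be proved using well-known techniques on covering spaces''; your argument is exactly that intended argument --- the null radical is a one-dimensional, $C^1$ line subbundle $K$ of $T\Lambda^n$ (by the constant-rank/adjugate observation), and the orientation double cover of $K$ trivializes its pullback, while simple connectivity kills any obstruction to orienting $K$ itself --- and it is correct. One cosmetic point: you mean a nonvanishing column of the adjugate of the full $n\times n$ Gram matrix of $ds^2$ (whose entries are its $(n-1)\times(n-1)$ minors and whose columns are annihilated by the matrix), rather than ``the adjugate of a minor.''
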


So to prove our main results, we may assume that 
$\Lambda^n$ admits a globally defined null vector field. 
By Nomizu and  Ozeki \cite{NO}, there exists a complete Riemannian metric
$h_\Lambda$ on $\Lambda^{n}$, and we now choose that metric. 

\begin{definition}
Let
$
F\colon\Lambda^n \to M^{n+1}_1
$
be a  null immersion.
A null vector field $\xi_\Lambda$ defined on
$\Lambda^n$ 
 is called an {\it $h_\Lambda$-normalized null vector field}
if
$$
h_\Lambda(\xi_\Lambda,\xi_\Lambda)=1.
$$
\end{definition}

Since we assume the existence of a null vector field
defined on $\Lambda^n$, the following assertion follows immediately.

\begin{proposition}\label{lem:L}
Let $F\colon\Lambda^{n}\to M^{n+1}_1$ be a 
$C^2$-differentiable null immersion. 
Then, there exists a $C^1$-differentiable $h_\Lambda$-normalized null 
vector field $\xi_\Lambda$ defined on $\Lambda^{n}$
up to a $\pm$-ambiguity.
\end{proposition}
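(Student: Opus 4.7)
The plan is to normalize a given null vector field using the complete Riemannian metric $h_\Lambda$. By the discussion following Proposition \ref{prop:O}, we may assume that a nowhere-vanishing null vector field $\xi$ on $\Lambda^n$ is available. Since $F$ is of class $C^2$, the induced symmetric bilinear form $F^{*}g_L$ is $C^{1}$; nullness of $F$ forces $F^{*}g_L$ to have constant rank $n-1$ with a $1$-dimensional kernel at each point. Applying the implicit function theorem to a suitably chosen non-degenerate $(n-1)\times(n-1)$ minor of $F^{*}g_L$ in local coordinates (equivalently, solving for the null direction by Cramer's rule) shows that the kernel line field is $C^{1}$. Hence $\xi$ may be chosen to be a $C^{1}$ vector field on its domain.

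With such a $\xi$ in hand, I would simply set
\[
\xi_\Lambda := \frac{\xi}{\sqrt{h_\Lambda(\xi,\xi)}}.
\]
Since $\xi$ is nowhere zero and $h_\Lambda$ is Riemannian, the denominator is a strictly positive $C^1$ function, so $\xi_\Lambda$ is a $C^{1}$ vector field satisfying the normalization $h_\Lambda(\xi_\Lambda,\xi_\Lambda)=1$, and it spans $\ker(F^{*}g_L)$ at every point. Any other $h_\Lambda$-normalized $C^{1}$ null vector field $\tilde\xi_\Lambda$ must, at each point, lie in the $1$-dimensional kernel of $F^{*}g_L$ and have $h_\Lambda$-norm $1$, so $\tilde\xi_\Lambda=\pm\xi_\Lambda$ pointwise; continuity of $\tilde\xi_\Lambda$ fixes this sign locally, leaving at most a global sign ambiguity on each connected component of $\Lambda^n$.

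The only mildly nontrivial ingredient is the $C^{1}$-regularity of the kernel line field of $F^{*}g_L$, which I expect to be the main (if modest) obstacle; this is standard linear algebra given the constant-rank hypothesis, but it is what prevents the proof from being purely formal. Once this regularity is established, the rest of the argument is essentially a renormalization and a uniqueness check, consistent with the authors' remark that the assertion follows immediately from the preceding setup.
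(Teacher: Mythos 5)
Your proof is correct and follows the route the paper intends: the authors simply assert the proposition "follows immediately" from the standing assumption that a global null vector field exists, and your argument fills in exactly the expected details — the $C^1$-regularity of the kernel line field of the $C^1$ tensor $F^*g_L$ (which has constant rank $n-1$ since a degenerate tangent hyperplane in a Lorentzian vector space has one-dimensional radical) and the $h_\Lambda$-normalization with the $\pm$ sign check.
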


\begin{definition}
A curve $\hat \gamma\colon J\to M^{n+1}_1$ is a {\it light-like pre-geodesic}
if the image $\hat\gamma(J)$ coincides with the image of a 
light-like geodesic $\sigma$ of $M^{n+1}_1$.
\end{definition}

The following assertion is well-known (see
Galloway \cite[Proposition 3.1]{Ga}
and also \cite[Corollary B]{UY}).

\begin{fact}\label{lem:F}
Let $F\colon\Lambda^n\to M^{n+1}_1$ be a 
$C^2$-differentiable null immersion.
For each $p\in \Lambda^n$, there exist
an open interval $I$ and a $C^2$-regular curve
$\gamma\colon I\to \Lambda^n$ such that
$\gamma(0)=p$ and $\hat \gamma:=F\circ \gamma$ gives a light-like geodesic
of $M^{n+1}_1$.
Moreover, 
for an integral curve $\gamma$ of an $h_\Lambda$-normalized 
null vector field, the curve $\hat{\gamma}=F\circ \gamma$ is a 
light-like pre-geodesic of $M^{n+1}_1$.
\end{fact}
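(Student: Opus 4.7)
The plan is to prove first that $F\circ\gamma$ is a light-like pre-geodesic whenever $\gamma$ is an integral curve of a null vector field, and then to upgrade it to a genuine affinely parametrized geodesic by reparametrizing on $\Lambda^n$. By Proposition \ref{prop:O} (or simply by working in a simply-connected neighborhood of $p$), we may assume a $C^1$-null vector field $\xi$ exists on $\Lambda^n$; for the ``moreover'' clause we take $\xi$ to be $h_\Lambda$-normalized. Let $\gamma\colon I\to\Lambda^n$ be the maximal integral curve of $\xi$ with $\gamma(0)=p$, set $\hat\gamma:=F\circ\gamma$ and $\hat\xi:=F_*\xi$, and view $\hat\xi$ as a section of $F^{-1}TM^{n+1}_1$.

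The core computation is to show
\[
\tilde\nabla_{\hat\xi}\hat\xi\in\R\,\hat\xi
\]
along $\hat\gamma$, where $\tilde\nabla$ denotes the Levi-Civita connection of $g_L$ (pulled back along $F$). For any vector field $Y$ on an open set in $\Lambda^n$ containing $\gamma(I)$, we have $g_L(\hat\xi,F_*Y)=(F^*g_L)(\xi,Y)=0$. Differentiating this identity along $\hat\xi$ and applying metric compatibility gives
\[
0=g_L(\tilde\nabla_{\hat\xi}\hat\xi,F_*Y)+g_L(\hat\xi,\tilde\nabla_{\hat\xi}F_*Y).
\]
Using torsion-freeness, the second term equals $g_L(\hat\xi,\tilde\nabla_{F_*Y}\hat\xi)+g_L(\hat\xi,F_*[\xi,Y])$; the first summand vanishes because $g_L(\hat\xi,\hat\xi)\equiv 0$, and the second vanishes because $F_*[\xi,Y]$ lies in $F_*T\Lambda^n$, to which $\hat\xi$ is $g_L$-orthogonal. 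Hence $\tilde\nabla_{\hat\xi}\hat\xi$ is $g_L$-orthogonal to the entire tangent plane $F_*T_{\gamma(t)}\Lambda^n$. Because this $n$-plane is a null hyperplane in $(T_{\hat\gamma(t)}M^{n+1}_1,g_L)$ whose $g_L$-orthogonal complement is the one-dimensional line $\R\,\hat\xi$, we conclude that $\tilde\nabla_{\hat\xi}\hat\xi=\lambda\hat\xi$ for a continuous function $\lambda$ along $\hat\gamma$. This already yields the second (pre-geodesic) assertion of the fact.

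To obtain the first assertion, I would reparametrize on $\Lambda^n$: choose $s(t)$ solving the scalar ODE $s''+\lambda(\hat\gamma\circ s)(s')^2=0$ with $s(0)=0$ and $s'(0)=1$, and set $\tilde\gamma:=\gamma\circ s$. A direct calculation then shows the velocity of $F\circ\tilde\gamma$ is parallel along itself, so $F\circ\tilde\gamma$ is a true light-like geodesic of $M^{n+1}_1$ passing through $F(p)$ at parameter $0$. I do not expect a serious obstacle here; the only delicate points are to treat $\hat\xi$ correctly as a vector field along the immersion $F$ (so that $\tilde\nabla$ means the pulled-back Levi-Civita connection) and to invoke the standard linear-algebra fact that the $g_L$-orthogonal complement of a degenerate $n$-plane in an $(n+1)$-dimensional Lorentzian vector space coincides with its one-dimensional null radical.
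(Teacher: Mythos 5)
Your argument is correct, and it is the standard proof of this statement: the paper itself offers no proof of Fact \ref{lem:F}, citing Galloway and Umehara--Yamada instead, and the computation you give (differentiating $g_L(\hat\xi,F_*Y)=0$ along $\xi$, using torsion-freeness and $g_L(\hat\xi,\hat\xi)=0$ to conclude $\tilde\nabla_{\hat\xi}\hat\xi\perp F_*T\Lambda^n=(\R\hat\xi)^{\perp\perp}$, hence $\tilde\nabla_{\hat\xi}\hat\xi\in\R\hat\xi$) is exactly the one in those references. The only points worth tightening are notational: the ODE should read $t''+\lambda(t(s))\,(t')^2=0$ for the change of parameter $t=t(s)$, and one should remark that the radical of the induced degenerate metric is exactly one-dimensional (a Lorentzian space admits no totally isotropic $2$-plane), so that $\xi$ really spans $(F_*T\Lambda^n)^\perp$.
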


In particular, any null hypersurfaces 
are ruled hypersurfaces foliated by
light-like geodesics.  
Regarding this fact, we define \lq $L$-completeness\rq\ 
as follows:

\begin{definition}\label{def:L}
A $C^2$-differentiable null immersion 
$F\colon\Lambda^n\to M^{n+1}_1$ 
is {\it $L$-complete} if
for each $p\in \Lambda^n$,
there exists a curve $\gamma\colon\R\to \Lambda^n$
passing through $p$ such that $F\circ \gamma$  
gives a complete light-like geodesic
in the ambient space $M^{n+1}_1$.
\end{definition}

We then prove the following assertion, which will play
a crucial role in proving Theorem A:
 
\begin{proposition}\label{prop:Phi}
Let $F\colon\Lambda^n\to M^{n+1}_1$ be a $C^2$-differentiable $L$-complete
null immersion and fix a point $p \in \Lambda^n$.
We let $\Sigma^{n-1}$ 
be an embedded hypersurface of $\Lambda^n$
passing through $p$ which is transversal to a
null vector field.
Then there exists 
an immersion
\begin{equation}\label{eq:gmu}
\Psi:\R\times \Sigma^{n-1} \to \Lambda^n
\end{equation}
such that 
$
\R\ni s \mapsto F\circ \Psi(s,q)\in M^{n+1}_1
$
 is a complete light-like geodesic in $M^{n+1}_1$ for each
$q\in \Sigma^{n-1}$, in particular,
\begin{equation}\label{eq:exp}
F\circ \Psi(s,q)=\op{Exp}^L_{F(q)}(s \hat \xi_q)\qquad 
(s\in \R,\,\, q\in \Sigma^{n-1})
\end{equation}
holds, where $\hat \xi$ is a certain light-like vector field along 
$F(\Sigma^{n-1})$
and $\op{Exp}^L_{F(q)}$ is the exponential map of $M^{n+1}_1$
with respect to $g_L$ centered at $F(q)$.
\end{proposition}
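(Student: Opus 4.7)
The plan is to construct $\Psi$ by flowing along a globally defined null vector field on $\Lambda^n$ and then reparametrizing each flow line by its affine geodesic parameter in $M^{n+1}_1$. Fix an $h_\Lambda$-normalized null vector field $\xi_\Lambda$ on $\Lambda^n$ as provided by Proposition~\ref{lem:L}. Since $|\xi_\Lambda|_{h_\Lambda}\equiv 1$ and $h_\Lambda$ is complete, the Hopf--Rinow theorem guarantees that every integral curve of $\xi_\Lambda$ is defined for all time, so the flow $\phi\colon\R\times\Lambda^n\to\Lambda^n$ is globally defined and $C^1$. Setting $\Phi(t,q):=\phi_t(q)$, the invariance $d\phi_t(\xi_\Lambda(q))=\xi_\Lambda(\phi_t(q))$ together with the transversality of $\xi_\Lambda$ to $\Sigma^{n-1}$ implies that $\Phi\colon\R\times\Sigma^{n-1}\to\Lambda^n$ is an immersion, since $d\Phi_{(t,q)}$ factors as $d\phi_t$ composed with the linear isomorphism $\R\oplus T_q\Sigma^{n-1}\to T_q\Lambda^n$ sending $(a,v)$ to $a\,\xi_\Lambda(q)+v$.

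By Fact~\ref{lem:F}, $\sigma_q(t):=F\circ\phi_t(q)$ is a light-like pre-geodesic in $M^{n+1}_1$, so $\nabla_{\dot\sigma_q}\dot\sigma_q=f(t,q)\,\dot\sigma_q$ for some continuous function $f$. The affine parameter along $\sigma_q$ is obtained by solving the ODE
\[
t''(s)+f(t(s),q)\,t'(s)^2=0,\qquad t(0,q)=0,\quad t'(0,q)=1,
\]
which yields $t(s,q)$ as a $C^1$-function of $(s,q)$ on its maximal domain of definition; note that $t'(s,q)>0$ throughout, since $t'(s_*,q)=0$ would force $t''(s_*,q)=0$ by the ODE and hence $t'\equiv 0$, contradicting $t'(0,q)=1$. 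The $L$-completeness hypothesis is precisely what forces this maximal domain to be $\R$: the complete null geodesic supplied by $L$-completeness through $q$, once affinely rescaled to have initial velocity $\hat\xi_q:=dF(\xi_\Lambda(q))$, must coincide with the affine reparametrization of $\sigma_q$ by uniqueness of geodesics, and consequently $t(\cdot,q)$ is defined on all of $\R$.

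Setting $\Psi(s,q):=\Phi(t(s,q),q)$ then gives the desired map. By construction, $F\circ\Psi(s,q)$ is a complete null geodesic starting at $F(q)$ with initial velocity $\hat\xi_q$, which is precisely \eqref{eq:exp}. Since $\partial_s t>0$, the map $(s,q)\mapsto(t(s,q),q)$ is a local $C^1$-diffeomorphism, and composing with the immersion $\Phi$ shows that $\Psi$ is an immersion. The main obstacle lies in the bridging step of the second paragraph: one must verify that the complete null geodesic supplied abstractly by $L$-completeness actually matches the affinely reparametrized flow line $\sigma_q$, which rests on ODE uniqueness of geodesics together with careful matching of initial velocities using the injectivity of $dF$. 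Once this identification is in hand, the remaining verifications are routine.
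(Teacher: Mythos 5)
Your proposal is correct and follows essentially the same route as the paper: flow the $h_\Lambda$-normalized null vector field to get an immersion $\Phi$ of $\R\times\Sigma^{n-1}$ (the transversality argument you give is exactly the paper's Proposition A.3), reparametrize each flow line to its affine geodesic parameter, and invoke $L$-completeness to extend that parameter to all of $\R$. The only differences are cosmetic — you write out the reparametrization ODE explicitly where the paper cites it as well known — and both arguments treat the identification of the $L$-completeness curve with the $\xi_\Lambda$-integral curve at the same level of detail.
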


In the case of $M^{n+1}_1=\R^3_1$,
null surfaces in $\R^3_1$  are flat surfaces
with respect to the canonical Euclidean
metric on $\R^3$, as pointed out in the introduction,
and  the  map $\Psi$ just
coincides with the map $\Phi$ given in
Galloway \cite[Remark IV.1]{Ga2}
and Murata-Umehara \cite[Proposition 2.5]{MU}.

\begin{proof}
We let $\xi_\Lambda$ be the $h_\Lambda$-normalized null
vector field on $\Lambda^n$.
Since $h_\Lambda$ is a complete Riemannian metric on $\Lambda^n$,
it is complete as a vector field, and it
induces a 1-parameter group of transformations
$
\Phi\colon\R\times \Lambda^n\to \Lambda^n.
$
By Proposition~A.3 in the appendix,
the following restriction of the map $\Phi$ to 
$\R\times \Sigma^{n-1}\to \Lambda^n$
gives an immersion.
For each $q\in \Sigma^{n-1}$,
the curve given by
$$
\gamma_q:\R\ni t\mapsto \Phi(t,q)\in \Lambda^n
$$
is a maximal integral curve of 
$\xi_\Lambda$ emanating from $q$, and so $F\circ \gamma_q$
gives a light-like pre-geodesic of $M^{n+1}_1$.
It is well-known that
one can find a parameter $s$ defined on an interval $J(\subset \R)$
and a smooth function $t=t(s,q)$
such that $J\ni s\mapsto F(\gamma_q(t(s,q)))\in M^{n+1}_1$
gives a maximal light-like geodesic, and
${\partial t(s,q)}/{\partial s}\ne 0$.
Since $F$ is $L$-complete, the image of the maximal integral curve of 
$\xi_\Lambda$ by $F$ coincides with the image of a complete light-like 
geodesic. Therefore, we may assume that $J=\R$.
So we get an immersion defined by
$$
\Psi:\R\times \Sigma^{n-1}
\ni (s,q)
\mapsto \Phi(t(s,q),q)\in
\Lambda^n,
$$
which has the desired properties, by setting
$$
\hat \xi_q:=
\left. \frac{\partial F(\gamma_q(t(s,q)))}{\partial s}\right|_{s=0}.
$$
\end{proof}

To end this section, we prove the following assertion:

\begin{theorem}\label{Thm:B}
Let $M^{n+1}_1$ $(n\ge 2)$ be a light-like geodesically 
complete Lorentzian manifold.
Then, any $C^2$-differentiable proper null 
hypersurfaces immersed in $M^{n+1}_1$
are $L$-complete.  
\end{theorem}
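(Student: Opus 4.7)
The plan is to use properness to prevent the forward flow of an $h_\Lambda$-normalized null vector field from reaching an affine-parameter horizon before the complete light-like geodesic it traces does. By Proposition \ref{prop:O} I may pass to a double cover (which preserves both properness and reduces $L$-completeness downstairs to $L$-completeness upstairs), and so assume that $\Lambda^n$ carries a global $h_\Lambda$-normalized null vector field $\xi_\Lambda$ as in Proposition \ref{lem:L}. Since $\xi_\Lambda$ has unit $h_\Lambda$-norm and $h_\Lambda$ is a complete Riemannian metric, every maximal integral curve $\gamma_p\colon\R\to\Lambda^n$ is defined on all of $\R$: the $h_\Lambda$-arclength equals the parameter length, so any finite endpoint would produce a Cauchy limit in $\Lambda^n$, and local existence at that limit would extend the curve, contradicting maximality.

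Fix $p\in\Lambda^n$. By Fact \ref{lem:F}, $F\circ\gamma_p$ is a light-like pre-geodesic; reparametrize by an affine parameter to obtain a smooth monotone $s\colon\R\to J$ with $J=(s_-,s_+)$, and a maximal light-like geodesic $\sigma\colon J\to M^{n+1}_1$ satisfying $F(\gamma_p(t))=\sigma(s(t))$. By light-like geodesic completeness of $M^{n+1}_1$, $\sigma$ extends to $\bar\sigma\colon\R\to M^{n+1}_1$. Once $J=\R$ is established, the curve $\gamma:=\gamma_p\circ s^{-1}\colon\R\to\Lambda^n$ satisfies $F\circ\gamma=\bar\sigma$ and delivers $L$-completeness at $p$.

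Suppose for contradiction $s_+<\infty$. Then $F(\gamma_p(t))\to\bar\sigma(s_+)$ as $t\to\infty$, so the forward orbit eventually sits inside $F^{-1}(K)$ for any compact neighborhood $K$ of $\bar\sigma(s_+)$, and $F^{-1}(K)$ is compact by properness. Extract $t_n\to\infty$ with $\gamma_p(t_n)\to q^*\in\Lambda^n$; necessarily $F(q^*)=\bar\sigma(s_+)$. Let $\gamma_{q^*}\colon\R\to\Lambda^n$ be the integral curve of $\xi_\Lambda$ through $q^*$; by Fact \ref{lem:F}, $F\circ\gamma_{q^*}$ is a light-like pre-geodesic through $\bar\sigma(s_+)$. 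Differentiating $F(\gamma_p(t))=\sigma(s(t))$ gives $dF(\xi_\Lambda(\gamma_p(t_n)))=s'(t_n)\,\sigma'(s(t_n))$; passing to the limit (the left side converges to the nonzero vector $dF_{q^*}(\xi_\Lambda(q^*))$ and $\sigma'(s(t_n))\to\bar\sigma'(s_+)\neq 0$) forces $s'(t_n)\to c$ for some $c>0$ and $dF_{q^*}(\xi_\Lambda(q^*))=c\,\bar\sigma'(s_+)$. By uniqueness of pre-geodesics with a prescribed initial position and tangent direction, $F(\gamma_{q^*}(t))=\bar\sigma(s^*(t))$ on a neighborhood of $0$, with $s^*(0)=s_+$ and $(s^*)'(0)=c>0$. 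By continuous dependence of ODE solutions and the flow identity $\gamma_p(t_n+t)=\gamma_{\gamma_p(t_n)}(t)$, $\gamma_p(t_n+t)\to\gamma_{q^*}(t)$ uniformly on compact $t$-intervals, so $\bar\sigma(s(t_n+t))\to\bar\sigma(s^*(t))$. Local injectivity of $\bar\sigma$ near $s_+$ then yields $s(t_n+t)\to s^*(t)$, which contradicts $s(t_n+t)<s_+<s^*(t)$ for small $t>0$. The case $s_-=-\infty$ is symmetric, so $J=\R$.

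The main obstacle is identifying, at the omega-limit point $q^*$, the tangent $\bar\sigma'(s_+)$ with $dF_{q^*}(\xi_\Lambda(q^*))$ up to scalar; without this, the pre-geodesic through $q^*$ might not continue $\bar\sigma$ past $s_+$, and the contradiction would collapse. This step relies on the immersed tangent plane at $q^*$ carrying a unique null direction, together with continuity of $\xi_\Lambda$, $dF$, and the extended geodesic $\bar\sigma$ at $s_+$.
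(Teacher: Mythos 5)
Your proof is correct, but it takes a genuinely different route from the paper's. The paper pulls back a complete ambient Riemannian metric $g_R$ to $h_R:=F^*g_R$ on $\Lambda^n$, uses properness once and for all to show (via a Cauchy-sequence argument) that $h_R$ is complete, and then normalizes the null field by $h_R$; the payoff is that the image pre-geodesic becomes $g_R$-unit-speed, so its forward and backward arcs have infinite $g_R$-length, and a one-line length comparison (Lemma \ref{lemma:L1}: a finite-$g_R$-length segment of the ambient complete light-like geodesic cannot contain a subarc of infinite length) shows the image exhausts the whole geodesic. You instead normalize by an arbitrary complete metric $h_\Lambda$ on $\Lambda^n$, so you get domain-completeness of the integral curves but no length control on their images; you then argue directly on the affine parameter, using ambient light-like geodesic completeness to produce the limit point $\bar\sigma(s_+)$ downstairs, properness to lift it to an accumulation point $q^*$ upstairs, and continuity of $dF$, $\xi_\Lambda$ and the flow (plus uniqueness of geodesics through a point in a given null direction, and local injectivity of $\bar\sigma$) to force the affine parameter past its supremum --- an ``escape lemma'' style contradiction. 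Your identification of $dF_{q^*}(\xi_\Lambda(q^*))$ with $c\,\bar\sigma'(s_+)$, $c>0$, and the uniform-on-compacta convergence $\gamma_p(t_n+t)\to\gamma_{q^*}(t)$ are exactly the points that make this work, and they are justified as you state them. What each approach buys: the paper's is shorter and isolates all the analysis into the completeness of $h_R$ (it also explains why the specific normalization by $F^*g_R$, rather than any complete metric, is chosen); yours uses properness only locally near the putative endpoint and is closer in spirit to standard geodesic-incompleteness arguments. One cosmetic slip: you call $\sigma\colon J\to M^{n+1}_1$ a \emph{maximal} geodesic before knowing $J=\R$; since you immediately pass to the complete extension $\bar\sigma$, this is harmless, but the word ``maximal'' should be dropped there.
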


\begin{proof}
By Nomizu and  Ozeki \cite{NO}, there exists a 
complete Riemannian metric
$g_R$ on $M^{n+1}_1$. We fix this Riemannian 
metric $g_R$. 
Let $F:\Lambda^n\to M^{n+1}_1$ be
a proper null immersion. 
By taking a universal covering of $\Lambda^n$, we may assume that
$\Lambda^n$ is simply connected.
We let $h_R:=F^*g_R$ be the Riemannian metric on $\Lambda^n$,
which may not be complete in general.

We shall now prove that $h_R$ is a complete metric 
under the assumption that $F$ is proper.
These two metrics $g_R$ and $h_R$ induce 
distance functions $d_M$ on $M^{n+1}_1$
and $d_\Lambda$ on $\Lambda^n$ which are compatible with
respect to the topologies of $M^{n+1}_1$ and  $\Lambda^n$, respectively.
Then the following inequality holds:
\begin{equation}\label{eq:DLE}
d_\Lambda(p,q)\ge d_M(F(p),F(q))\qquad (p,q\in \Lambda^n).
\end{equation}
Let $\{p_j\}_{j=1}^\infty$ be a Cauchy sequence of $(\Lambda^n,d_\Lambda)$.
By \eqref{eq:DLE},
$\{F(p_j)\}_{j=1}^\infty$ is also a Cauchy sequence of $(M^{n+1}_1,d_M)$.
Since $g_R$ is a complete Riemannian metric, 
$\{F(p_j)\}_{j=1}^\infty$ converges to a point $Q\in M^{n+1}_1$.
Consider a geodesic ball $B_Q(r)(\subset M^{n+1}_1)$  with respect to $g_R$
of radius $r$ centered at $Q$.
If $r$ is sufficiently large, $\{F(p_j)\}_{n=j}^\infty$ lies in $B_Q(r)$.
Since $F$ is a proper map, $F^{-1}(\overline{B_Q(r)})$ is a compact
subset of $\Lambda^n$.
Since $\{p_j\}_{j=1}^\infty$ lies in $F^{-1}(\overline{B_Q(r)})$,
the sequence $\{p_j\}_{n=j}^\infty$ has an accumulation point $q$.
Since $\{p_j\}_{j=1}^\infty$ is a Cauchy sequence,
it must converge to a point $q$.
Thus $(\Lambda^n,d_\Lambda)$ is complete as a metric space,
and $h_R$ is a complete Riemannian metric on $\Lambda^n$.
Since $\Lambda^n$ can be assumed to be simply connected,
we can take a null vector field $\xi$ without zeros defined on
$\Lambda^n$.
We then consider the null vector field
$$
\xi_R:=\xi/\sqrt{h_R(\xi,\xi)}.
$$
Since $\xi_R$ is a unit vector field with respect to the complete 
Riemannian metric $h_R$,
it is a complete vector field on $\Lambda^n$.
Then each maximal integral curve $\gamma(t)$ 
of $\xi_R$ is defined on $\R$.
We set $\hat \gamma(t)=F\circ \gamma(t)$.
To complete the proof, it is sufficient to
show the following lemma:
\end{proof}

\begin{lemma}\label{lemma:L1}
Let $M^{n+1}_1$ be a light-like geodesically complete
Lorentzian manifold.
If $\hat\gamma\colon\R\to M^{n+1}_1$ is a  
pre-geodesic such that $g_R(\hat \gamma'(t),\hat \gamma'(t))=1$
for each $t\in \R$, then $\hat \gamma$ is 
$L$-complete. 
\end{lemma}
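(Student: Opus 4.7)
The plan is to reparametrize $\hat\gamma$ by the affine parameter of a complete light-like geodesic and show, via a length identity and a parallel-transport argument, that this reparametrization is a global diffeomorphism of $\R$.

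Since $\hat\gamma$ is a pre-geodesic, its image lies on some light-like geodesic, which by light-like geodesic completeness of $M^{n+1}_1$ extends to a complete light-like geodesic $\tilde\sigma\colon\R\to M^{n+1}_1$. Normalize so that $\tilde\sigma(0)=\hat\gamma(0)$ and $\tilde\sigma'(0)$ is positively proportional to $\hat\gamma'(0)$. On its maximal interval $(t_-,t_+)\ni 0$ there is a smooth strictly increasing lift $\phi$ with $\hat\gamma(t)=\tilde\sigma(\phi(t))$; differentiating and using $|\hat\gamma'(t)|_R=1$ gives $\phi'(t)=1/|\tilde\sigma'(\phi(t))|_R$, and a change of variable yields the length identity
\begin{equation*}
T\;=\;\int_0^T|\hat\gamma'(t)|_R\,dt\;=\;\int_0^{\phi(T)}|\tilde\sigma'(u)|_R\,du.
\end{equation*}
I would then prove $(t_-,t_+)=\R$. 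Suppose to the contrary that $t_+<\infty$; maximality of $(t_-,t_+)$ together with the inverse function theorem applied to the immersion $\tilde\sigma$ forces $\phi(T)\to+\infty$ as $T\to t_+^-$, so the identity gives $t_+=\int_0^{\infty}|\tilde\sigma'|_R\,du<\infty$. Hence $\tilde\sigma|_{[0,\infty)}$ has finite $g_R$-length, so by completeness of $g_R$ and the Cauchy criterion $\tilde\sigma(u)$ converges to some $p^*\in M^{n+1}_1$.

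The contradiction comes from a parallel-transport argument. Reparametrize $\tilde\sigma|_{[0,\infty)}$ by $g_R$-arc length to extend it to a rectifiable curve joining $\tilde\sigma(0)$ to $p^*$; along this path, parallel transport with respect to the Levi-Civita connection of $g_L$ satisfies a linear ODE with bounded coefficients and extends continuously to $p^*$ as a linear isomorphism $P\colon T_{\tilde\sigma(0)}M^{n+1}_1\to T_{p^*}M^{n+1}_1$. Since $\tilde\sigma'(u)=P_{0\to u}\tilde\sigma'(0)$ along the geodesic, we have $\tilde\sigma'(u)\to P\tilde\sigma'(0)\neq 0$; in particular $|\tilde\sigma'(u)|_R$ is bounded below by a positive constant for large $u$, which forces $\int_0^{\infty}|\tilde\sigma'|_R\,du=\infty$, contradicting the finite value of $t_+$. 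The same argument at the other end gives $t_-=-\infty$.

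Therefore $\phi\colon\R\to\R$ is a diffeomorphism, and $\hat\gamma=\tilde\sigma\circ\phi$ is a reparametrization of the complete light-like geodesic $\tilde\sigma$; this is precisely $L$-completeness. The main obstacle is ruling out the pathological possibility that $\tilde\sigma$ accumulates at a point of $M^{n+1}_1$ in finite $g_R$-arc length, and the parallel-transport step is what prevents this, using both the global existence of $\tilde\sigma$ and the completeness of $g_R$.
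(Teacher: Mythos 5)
Your argument is correct, but it is organized quite differently from the paper's and does noticeably more work. Both proofs ultimately rest on the same length identity $T=\int_0^{\phi(T)}|\tilde\sigma'(u)|_R\,du$. The paper argues in three lines: if $\hat\gamma(\R)$ were a proper subset of the complete geodesic's image $L$, pick $Q\in L\setminus\hat\gamma(\R)$; then $\hat\gamma([0,\infty))$ sits inside the compact segment of $L$ bounded by $P=\hat\gamma(0)$ and $Q$, which has finite $g_R$-length, contradicting $\int_0^T|\hat\gamma'(t)|_R\,dt=T\to\infty$. No parallel transport is needed there, because under the contradiction hypothesis the relevant piece of $L$ is automatically compact. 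Your extra parallel-transport step appears because you aim at the stronger conclusion that $\phi\colon\R\to\R$ is a diffeomorphism, which forces you to exclude the scenario $\int_0^\infty|\tilde\sigma'|_R\,du<\infty$ (the lift blowing up at a finite time $t_+$); but in that scenario $\hat\gamma$ already covers all of $\tilde\sigma([0,\infty))$, so it is not an obstruction to the equality of images that $L$-completeness actually requires. Your parallel-transport lemma is correct and is a nice by-product (each half-ray of a complete geodesic has infinite length with respect to the complete background Riemannian metric $g_R$), but it is dispensable for the lemma as stated. Two small points you should make explicit: (i) continuing the lift past a finite $t_+$ when $\lim_{T\to t_+^-}\phi(T)$ is finite uses not only that $\tilde\sigma$ is an immersion but also that $\hat\gamma$ is a pre-geodesic, so that uniqueness of geodesics keeps $\hat\gamma$ on $\tilde\sigma$ near the limit point; (ii) in the last step, surjectivity of $\phi$ in the case $t_+=+\infty$ is not automatic from the domain being all of $\R$ --- it follows from the same displayed identity (if $\sup\phi=c<\infty$ then $T\le\int_0^{c}|\tilde\sigma'|_R\,du<\infty$ for every $T$), and this one-line observation is in effect the paper's entire proof.
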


\begin{proof}
We set $P:=\hat \gamma(0)\in M^{n+1}_1$.
By Fact \ref{lem:F}, 
we have $\hat \gamma(\R)\subset L$,
where $L$ is the image of a complete
light-like geodesic in $M^{n+1}_1$.
Suppose that $\hat \gamma(\R)$ is a
proper subset of $L$. 
We can take a point $Q\in L\setminus \hat \gamma(\R)$.
Then the length of a segment $[P,Q]$
of $L$ (with respect to the metric $g_R$)
bounded by $P$ and $Q$
must be finite.
However, this contradicts that 
the subarc $\hat\gamma([0,\infty))$ of the segment $[P,Q]$
has infinite length with respect to the metric $g_R$.
\end{proof}

\section{Proof of Main results} \label{sec:main} 

In this section, we 
assume that the Ricci tensor of $M^{n+1}_1$ 
is non-negative along light-like directions
$($that is, $M^{n+1}_1$ satisfies the null energy condition$)$.
By Theorem \ref{Thm:B}, to prove Theorem A in
the introduction,
it is sufficient to prove the following assertion,
where we do not assume that
$M^{n+1}_1$ is light-like geodesically complete:

\begin{theorem}\label{thm:old-A}
Let $F\colon\Lambda^n\to M^{n+1}_1$ $(n\ge 2)$
be an $L$-complete null $C^2$-immersion.
Suppose that the Ricci tensor of $M^{n+1}_1$ 
is non-negative along light-like directions,
that is,  $M^{n+1}_1$ satisfies the null energy 
condition $($NEC$)$.
Then, $F$ is totally geodesic
$($that is, any geodesic in $M^{n+1}_1$ which is
tangent to $F(\Lambda^n)$ at a certain point
lies in $F(\Lambda^n))$, and
the Ricci tensor of $M^{n+1}_1$ 
vanishes along null directions of $\Lambda^n$.
\end{theorem}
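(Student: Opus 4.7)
The plan is to apply the classical Raychaudhuri equation to the null generators supplied by Proposition~\ref{prop:Phi}, and to use $L$-completeness together with NEC to force the expansion, the shear and the null Ricci curvature along those generators to vanish identically. Concretely, fix $p\in\Lambda^n$, choose a local screen $\Sigma^{n-1}\subset\Lambda^n$ through $p$ transverse to the $h_\Lambda$-normalized null vector field, and let $\Psi\colon\R\times\Sigma^{n-1}\to\Lambda^n$ be the immersion provided by Proposition~\ref{prop:Phi}; then
\[
\hat\gamma_q(s):=F\circ\Psi(s,q)=\op{Exp}^L_{F(q)}(s\hat\xi_q)
\]
is an affinely parametrized complete light-like geodesic of $M^{n+1}_1$ for every $q\in\Sigma^{n-1}$.

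Along each generator $\hat\gamma_q$, I would introduce the null Weingarten map $B=B(s,q)$ acting on the $(n-1)$-dimensional screen bundle $T_{\Psi(s,q)}\Lambda^n/\R\xi_\Lambda$ and decompose $B=\frac{\theta}{n-1}\op{id}+\sigma$ with $\sigma$ trace-free. A standard computation yields the Riccati equation $B'+B^{2}=-R_{\hat\gamma_q'}$ along the generator, whose trace is the Raychaudhuri equation
\[
\theta'(s,q)=-\op{Ric}_{M^{n+1}_1}\bigl(\hat\gamma_q'(s),\hat\gamma_q'(s)\bigr)-|\sigma|^2-\frac{\theta^{2}}{n-1}.
\]
By NEC the Ricci term on the right is nonpositive, whence $\theta'\le-\theta^{2}/(n-1)$. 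Comparison with the solutions of $\phi'=-\phi^{2}/(n-1)$ shows that if $\theta(s_0,q)\ne 0$ for some $s_0$, then $\theta(s,q)\to-\infty$ at some finite affine time, running $s$ forward or backward depending on the sign of $\theta(s_0,q)$. But $\Psi$ is defined on all of $\R\times\Sigma^{n-1}$ and $F$ is $C^2$, so $B$, and in particular $\theta$, is continuous on every full generator; finite-time blow-up is therefore impossible. This contradiction forces $\theta\equiv 0$.

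Reinserting $\theta\equiv 0$ into the Raychaudhuri equation gives $|\sigma|^2+\op{Ric}_{M^{n+1}_1}(\hat\gamma_q',\hat\gamma_q')\equiv 0$, and the nonnegativity of the two summands forces each to vanish. This already supplies the second assertion: the Ricci tensor of $M^{n+1}_1$ vanishes along every null direction of $\Lambda^n$. Moreover $B=\frac{\theta}{n-1}\op{id}+\sigma\equiv 0$ on every screen along every generator, and a standard computation then shows that $\nabla^L_XY$ remains tangent to $F(\Lambda^n)$ for every pair of tangent vector fields $X,Y$. Hence any geodesic of $M^{n+1}_1$ tangent to $F(\Lambda^n)$ at some point stays in $F(\Lambda^n)$, which is the required total geodesy.

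The main technical obstacle is setting up the null Weingarten map and the associated Riccati/Raychaudhuri equation rigorously under the weak $C^2$ hypothesis on $F$: one must verify that $\theta$ and $|\sigma|^2$ are well defined and independent of the auxiliary choice of screen complement, and that the ODE comparison is valid at the available regularity. Once this is in place the remaining argument is essentially ODE comparison, and the decisive gain from Proposition~\ref{prop:Phi} is precisely that it supplies full-line affine parameters, which is exactly what is needed to rule out the finite-time blow-up predicted by the Riccati inequality.
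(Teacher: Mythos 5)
Your proof is correct and follows essentially the same route as the paper: both arguments hinge on Proposition~\ref{prop:Phi} to obtain complete, affinely parametrized null generators, and then combine NEC with the Raychaudhuri equation to force the expansion, the shear and $\op{Ric}(\hat\xi,\hat\xi)$ to vanish, whence the null second fundamental form is zero and total geodesy follows. The only organizational difference is that the paper first kills the expansion by invoking O'Neill's focal-point theorem for the codimension-two space-like slices $F(\Sigma^{n-1}_u)$ (whose proof is precisely the Riccati comparison you carry out by hand) and only then feeds $H_u\equiv 0$ back into the Raychaudhuri equation, so your direct blow-up argument for $\theta$ is an equivalent, slightly more self-contained packaging of the same step; the $C^2$ regularity caveat you flag is present in the paper's argument as well.
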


Let $F\colon\Lambda^n\to M^{n+1}_1$ $(n\ge 2)$
be a $C^2$-differentiable $L$-complete null immersion.
We fix a point $p\in \Lambda^n$ and
a non-zero tangent vector $\mb v\in T_p\Lambda^n$
arbitrarily.
Then there exist a positive number $\epsilon$ and
a unique geodesic 
$\hat \gamma\colon(-\epsilon,\epsilon)\to M^{n+1}_1$ such that 
\begin{equation}\label{eq:v}
\hat \gamma(0)=F(p),\quad \hat \gamma'(0)=dF(\mb v).
\end{equation}

\medskip
\noindent
{({\it Proof of Theorem \ref{thm:old-A}}.)
Since the desired conclusion is local,
we can take a non-vanishing null vector field $\xi$ 
defined on a 
sufficiently small coordinate neighborhood $(U,u_1,\dots,u_n)$ at $p$.
Since the images of integral curves of $\xi$ by $F$ 
are light-like pregeodesics
and light-like pregeodesics turns into geodesics by  
suitable changes of parameter, 
we may assume that $\nabla_\xi\hat \xi=0$.
Here, the $\nabla$ is the 
Levi-Civita connection on $(M^{n+1}_1,g_L)$
 and $\hat{\xi}=dF(\xi)$.
To prove Theorem A, it is sufficient to show that
$\hat \gamma$ lies on the image of $F$.
If $\mb v$ is proportional to $\xi$,
we have already seen that $\hat \gamma$
is  a pre-geodesic of $M^{n+1}_1$ which lies in $F(\Lambda^n)$.
So we may assume that $dF(\mb v)$ is a space-like vector.

Using the coordinate system $(U,u_1,\dots,u_n)$,
one can construct
space-like vector fields $S_1, \dots, S_{n-1}$ 
(called screen vector fields)
such that
$$
S_1,\,\, \dots,\,\, S_{n-1},\,\, \xi
$$
give a frame field of $\Lambda^n$ on $U$.
Then we can take a unique vector field $\eta$ of $M^{n+1}_1$
defined on $U$ such that
$$
g_L(\eta,\eta)=0,\quad
g_L(\hat \xi,\eta)=1,\quad g_L(\hat S_i,\eta)=0
\qquad
(i=1,...,n-1),
$$
where $\hat S_i:=dF(S_i)$ $(i=1,\dots,n-1)$.
Using this, we can define a torsion-free connection $D$
on $U$ satisfying
$$
\nabla_{dF(X)}dF(Y)=dF(D_XY)+B(X,Y)\eta
$$
for each pair of vector fields $X,Y$ on $U$,
where $B(X,Y)$ is a covariant symmetric tensor defined on $U$.
Then, there exists a $D$-geodesic
(i.e. a geodesic with respect to $D$)
 $\gamma\colon[-\epsilon_0,\epsilon_0]\to U$
such that $\gamma(0)=p$ and $\gamma'(0)=\mb v$,
where $\epsilon_0>0$.
We can choose $\epsilon_0$ so that $\gamma$ is an embedding.
By a standard orthogonalization procedure,
we can take linearly independent vector
fields 
$
V_0,\,\,V_1,\,\,\dots,\,\,V_{n-2}
$
along $\gamma$ such that 
\begin{itemize}
\item[(a)]  
$V_0(t)$
coincides with $\gamma'(t)$,
and each $V_i(t)$ ($i=0,...,n-2$)
is expressed as a linear combination of 
$\{S_i\}_{i=1}^{n-1}$ at $\gamma(t)$,
\item[(b)]
$g_L\biggl(dF(V_0(t)),
dF(V_j(t))\biggr)=0$
for $j=1,...,n-2$
and $t\in (-\epsilon_0, \epsilon_0)$,
and
\item[(c)]
$g_L\biggl(dF(V_i(t)),
dF(V_j(t))\biggr)=\delta_{ij}$
holds for $i,j=1,...,n-2$
and $t\in (-\epsilon_0, \epsilon_0)$, where $\delta_{ij}$ 
is the Kronecker's delta.
\end{itemize}

It is sufficient to prove $F\circ \gamma=\hat \gamma$.
For a sufficiently small $\delta(>0)$,
we set
$$
\Omega:=\{(t_0,t_1,...,t_{n-2},u)\in \R^{n}\,;\, |t_0|<\epsilon_0, \,\,\,
|u|,\,|t_i|<\delta \,\,\, (i=1,...,n-2)\}.
$$
We define a map $\phi\colon\Omega\to \Lambda^n$ by
$$
\phi(t_0,t_1,...,t_{n-2},u):=\op{Exp}^D_{\gamma(t_0)}
\left (u\xi_{\gamma(t_0)}+\sum_{i=1}^{n-2}t_i  V_i(t_0) \right),
$$
where $(t_0,t_1,...,t_{n-2},u)\in \Omega$ and
$\op{Exp}^D_q$ is the exponential map with respect to 
the connection $D$ centered at $q\in \Lambda^n$.
Since $\gamma$ is an embedding,
$\phi$ gives a local coordinate system of $\Lambda^n$.
For $u\in(-\delta,\delta)$,
we set
$$
\Omega_{u}:=\{(t_0,t_1,...,t_{n-2},u)\in \Omega\,;
\,|t_0|<\epsilon_0,\, |t_i|<\delta \,\,\, (i=1,...,n-2)
\},
$$
and denote by $\phi_{u}$ the restriction of $\phi$ to $\Omega_u$.
Then $\{\phi_u(\Omega_u)\}_{|u|<\delta}$ is 
a family of embedded hypersurfaces in $\Lambda^n$
which is transversal to $\xi$. 
We fix $u\in (-\delta,\delta)$ arbitrarily, 
and let
$$
\Psi_u:\R\times \Sigma^{n-1}_u\to \Lambda^{n}
$$
be the map given in Proposition \ref{prop:Phi}
by setting $\Sigma^{n-1}_u:=\phi_u(\Omega_{u})$.
Then $\hat \Psi_u:=F\circ \Psi_u$ is a $L$-complete 
null immersion.
Since $M^{n+1}_1$ satisfies
the null energy condition,
the mean curvature vector 
field  $\mb H_u(t_0,t_1,...,t_{n-2})$ of $F(\Sigma^{n-1}_u)$
satisfies,  by \cite[Prop. 43 in Chap. 10]{ON},
\begin{equation}\label{eq:H}
(H_u:=)g_L(\mb H_u,\hat\xi)
=0.
\end{equation}
In fact, if we set $\sigma(t):=\hat{\Psi}_u(t,q)$
($q\in \Sigma^{n-1}_u$) and if
$H_u=g_L(\mb H_u,\sigma'(0))\ne 0$, then
there exists a focal point of $F(\Sigma^{n-1}_u)$
on $\sigma(\R)$. However, this contradicts the
expression \eqref{eq:exp}, since $F\circ \Psi_u$
is an immersion. 
It can be easily checked that the function
$$
(t_0,t_1,...,t_{n-2},u)\mapsto H_u(t_0,t_1,...,t_{n-2})
$$ 
defined on $\Omega$
coincides with the light-like mean curvature function $H_\xi$
with respect to $F\circ \phi$
up to a constant multiple 
(cf. \cite[(II.1)]{Ga2} and \cite{DG}). 
Since $H_u=0$ for each $u$,
by the Raychaudhuli equation
(cf. \cite[(13)]{DG}, \cite[Theorem 45]{K} and \cite[(II.4)]{Ga2}),
we have that
$$
\op{Ric}(\hat\xi,\hat\xi)+\op{Trace}(A_\xi^2)=0,
$$
which yields that $\op{Ric}(\hat \xi,\hat \xi)$ and
$A_\xi(V_i)$ $(i=1,...,n-1)$ vanishes along the curve $\sigma$,
where $\op{Ric}$ is the Ricci tensor of $M^{n+1}_1$ and
$A_\xi$ is the shape operator
on the distribution spanned by $\{S_1,...,S_{n-1}\}$.
We have that
\begin{align*}
B(V_i,V_j)
=g_L(\nabla_{V_i}V_j,\xi)
=-g_L(V_j,\nabla_{V_i}\xi)=g_L(V_j,A_\xi(V_i))=0.
\end{align*}
Since $B(\xi,X)=0$
for any tangent vector $X$ of $\Lambda^n$,
we have $B=0$ 
along $\gamma$. 
By \cite[Corollary 47]{K}, this fact yields that $F\circ\gamma$ 
is a geodesic of $M^{n+1}_1$. 
(In fact, since $\gamma$ is a $D$-geodesic,
$
\nabla_{(F\circ \gamma)'}(F\circ \gamma)'=
dF(D_{\gamma'}\gamma')+B(\gamma',\gamma')
=B(\gamma',\gamma')=0.
$)
Thus, we can conclude that $F\circ\gamma=\hat\gamma$.
\qed

\begin{proof}[Proof of Corollary C]
Since the graph of an entire function $f$ is 
properly embedded,
the assertion follows from Theorem A, Theorem \ref{Thm:B}
and  Corollary B. 
\end{proof}

We give here several examples:

\begin{example}
We set
$$
S^3_1:=\{(t,x,y,z)\in \R^4_1\,;\, -t^2+x^2+y^2+z^2=1\},
$$
which gives the  de Sitter 3-space of constant curvature $1$. 
Then
$$
F(s,t):=(t,\cos s,\sin s,t)
$$
gives an  $L$-complete
totally geodesic null surface in $S^3_1$.
\end{example}

\begin{example}
Similarly, we set
$$
H^3_1:=\{(t,x,y,z)\in \R^4_2\,;\, -t^2-x^2+y^2+z^2=1\},
$$
which gives the  anti-de Sitter 3-space of constant curvature $-1$. 
Then
$$
F(s,t):=(t,\cosh s,\sinh s,t)
$$
gives an  $L$-complete totally  geodesic null surface in $H^3_1$.
\end{example}

\begin{example}
We set 
$$
M^3_1:=\{(t,x,y,z)\in \R^4_2\,;\, -x^2+y^2+z^2=-1,\,\, x>0\}.
$$
Then $M^3_1$ is a product $\R\times H^2$, where
$H^2$ is a hyperbolic plane. An embedding defined by
$$
F(s,t):=(t,\cosh s \cosh t,\sinh s \cosh t,\sinh t)
$$
is  $L$-complete and null.
However, $F$ is not totally geodesic.
In fact, 
$M^3_1$ does not satisfy the null energy condition.
\end{example}

\appendix
\section{A lemma on complete vector fields}

We recall the definition of completeness of vector
fields as follows:

\begin{definition}\label{def:vf}
Let $X$ be a vector field defined on an $n$-manifold $M^n$.
Then $X$ is called {\it complete} if for each $p\in M^n$,
there exists a curve
$\gamma\colon\R\to M^n$ such that $\gamma(0)=p$
and $\gamma'(t)=X_{\gamma(t)}$ for $t\in \R$.
\end{definition}

The following fact is well-known (cf. \cite[Theorem 2.95]{L}):

\begin{fact}\label{fact:Diff}
Let $X$ be  a complete vector field defined on $M^n$.
Then it induces a $1$-parameter group of transformations
$\phi_t\colon M^n\to M^n$ such that
$\R\ni t \mapsto \phi_t(p)\in M^n$
is an integral curve of $X$ passing through $p\in M^n$ at $t=0$. 
\end{fact}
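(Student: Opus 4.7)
The plan is to build the family $\{\phi_t\}_{t\in\R}$ pointwise from maximal integral curves of $X$, and then to verify the one-parameter group law using uniqueness of integral curves. Completeness (Definition~\ref{def:vf}) is exactly what guarantees that every maximal integral curve is defined on all of $\R$, which is what makes the construction global in the time parameter.

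First, for each $p\in M^n$, I would invoke Definition~\ref{def:vf} to fix an integral curve $\gamma_p\colon\R\to M^n$ with $\gamma_p(0)=p$ and $\gamma_p'(t)=X_{\gamma_p(t)}$. Standard local uniqueness for ODEs (Picard--Lindel\"of applied in a chart) shows that any two integral curves of $X$ starting at the same point agree on the overlap of their domains, so $\gamma_p$ is uniquely determined. I would then define
\[
\phi_t\colon M^n\to M^n,\qquad \phi_t(p):=\gamma_p(t),
\]
so that $\phi_0=\op{id}$ is immediate, and $t\mapsto \phi_t(p)$ is an integral curve of $X$ through $p$ by construction.

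Next I would verify the composition law $\phi_{t+s}=\phi_t\circ\phi_s$. For fixed $s\in\R$ and $p\in M^n$, set $\alpha(t):=\gamma_p(t+s)$. Then $\alpha(0)=\phi_s(p)$ and $\alpha'(t)=X_{\alpha(t)}$, so by uniqueness $\alpha$ coincides with $\gamma_{\phi_s(p)}$, giving $\phi_{t+s}(p)=\phi_t(\phi_s(p))$. In particular $\phi_t\circ\phi_{-t}=\phi_0=\op{id}$, so each $\phi_t$ is a bijection with inverse $\phi_{-t}$.

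The main obstacle is upgrading this set-theoretic bijection to a \emph{smooth} diffeomorphism. Smooth dependence on initial data is inherently a local statement: classical ODE theory gives a flow box around any $p_0\in M^n$ on which $(t,p)\mapsto\phi_t(p)$ is smooth for small $|t|$. To promote this to smoothness for an arbitrary $t_0\in\R$, I would cover the compact arc $\{\phi_s(p_0):0\le s\le t_0\}$ by finitely many such flow boxes of uniform time-radius $\epsilon>0$, choose a partition $0=s_0<s_1<\dots<s_N=t_0$ with $s_{k+1}-s_k<\epsilon$, and use the composition law established above to write $\phi_{t_0}$ near $p_0$ as the finite composition of local smooth flows on each subinterval. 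This yields smoothness of each $\phi_t$ on $M^n$ and confirms that $\{\phi_t\}_{t\in\R}$ is a one-parameter group of diffeomorphisms with the required property.
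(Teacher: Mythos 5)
The paper does not prove this statement at all: it is labeled a ``Fact'' and dispatched with a citation to \cite[Theorem 2.95]{L}, so there is no in-paper argument to compare against. Your proof is the standard textbook one and it is correct: completeness supplies a globally defined integral curve through each point, local ODE uniqueness (extended to the whole overlap of domains by the usual open-and-closed argument on a connected interval) makes $\gamma_p$ well defined and yields the group law $\phi_{t+s}=\phi_t\circ\phi_s$, and smoothness of $\phi_{t_0}$ for large $t_0$ follows by covering the compact orbit segment $\{\phi_s(p_0):0\le s\le t_0\}$ with finitely many flow boxes of uniform time-radius and writing $\phi_{t_0}$ locally as a finite composition of local flows. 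The only point worth making explicit in that last step is that one must shrink the neighborhood of $p_0$ so that its image under each partial composition $\phi_{s_k}$ stays inside the flow box centered at $\phi_{s_k}(p_0)$; this follows from continuity and is implicit in your argument. One further remark relevant to how the Fact is actually used in the paper: in Theorem~\ref{Thm:B} the vector field $\xi_R$ is only $C^1$, but your Picard--Lindel\"of argument still applies since $C^1$ implies locally Lipschitz, with the flow then being $C^1$ rather than $C^\infty$.
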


We prove the following fact: 

\begin{proposition}\label{prop:Phi2}
Let $X$ be a complete vector field defined on $M^n$. 
Let $\Sigma^{n-1}$  be an embedded hypersurface of $M^n$.
Suppose that $X$ is transversal to $\Sigma^{n-1}$.
Then the map defined by
$$
\Phi\colon\R\times \Sigma^{n-1}\ni (t,p)\to \phi_t(p)\in M^n
$$
gives an immersion.
\end{proposition}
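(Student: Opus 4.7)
The plan is to verify that the differential $d\Phi_{(t_0,p_0)}$ is injective for every $(t_0,p_0)\in \R\times \Sigma^{n-1}$ and to deduce injectivity from the transversality hypothesis by using that the flow of $X$ preserves $X$ itself. Since $\phi_t$ forms a $1$-parameter group of diffeomorphisms by Fact \ref{fact:Diff}, we can write $\Phi(t,p)=\phi_{t_0}(\phi_{t-t_0}(p))$ near $(t_0,p_0)$, so the analysis reduces essentially to the base case $t_0=0$ shifted by a known diffeomorphism.

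First I would compute the two components of the differential. For the $\R$-direction, the curve $t\mapsto \phi_t(p_0)$ is by definition an integral curve of $X$, hence
\[
d\Phi_{(t_0,p_0)}\!\left(\frac{\partial}{\partial t}\right)=X_{\phi_{t_0}(p_0)}.
\]
For $v\in T_{p_0}\Sigma^{n-1}$, the restriction of $\Phi$ to $\{t_0\}\times \Sigma^{n-1}$ equals $\phi_{t_0}\big|_{\Sigma^{n-1}}$, so
\[
d\Phi_{(t_0,p_0)}(v)=(d\phi_{t_0})_{p_0}(v).
\]

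Next I would use the well-known flow-invariance of $X$: since $\phi_t$ is the flow of $X$, we have $(d\phi_{t_0})_{p_0}(X_{p_0})=X_{\phi_{t_0}(p_0)}$. Therefore any element $a\partial_t+v$ of the kernel of $d\Phi_{(t_0,p_0)}$ satisfies
\[
0=aX_{\phi_{t_0}(p_0)}+(d\phi_{t_0})_{p_0}(v)=(d\phi_{t_0})_{p_0}\bigl(aX_{p_0}+v\bigr).
\]
Because $\phi_{t_0}$ is a diffeomorphism, $(d\phi_{t_0})_{p_0}$ is a linear isomorphism, hence $v=-aX_{p_0}$. But $v\in T_{p_0}\Sigma^{n-1}$ while by the transversality assumption $X_{p_0}\notin T_{p_0}\Sigma^{n-1}$, forcing $a=0$ and then $v=0$. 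This proves injectivity of $d\Phi_{(t_0,p_0)}$ and hence that $\Phi$ is an immersion.

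No step here is really an obstacle; the only subtle point is recognizing that the transversality hypothesis, which is stated pointwise on $\Sigma^{n-1}$, is automatically transported along the flow because $d\phi_{t_0}$ sends $X_{p_0}$ to $X_{\phi_{t_0}(p_0)}$. This is precisely what allows the argument at an arbitrary time $t_0$ to be reduced to transversality at $p_0\in\Sigma^{n-1}$. Note that $\Phi$ need not be an embedding (integral curves may return to meet $\Sigma^{n-1}$), which is why the conclusion is only stated as an immersion; this is consistent with how Proposition \ref{prop:Phi} uses it.
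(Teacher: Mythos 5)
Your proof is correct and is essentially the same argument as the paper's: both identify the image of $\partial_t$ with $X_{\phi_{t_0}(p_0)}=(d\phi_{t_0})_{p_0}(X_{p_0})$ via flow-invariance, use that $\phi_{t_0}$ is a diffeomorphism to handle the $\Sigma^{n-1}$-directions, and conclude full rank from transversality. The only cosmetic difference is that you compute the kernel by pulling back through $(d\phi_{t_0})_{p_0}^{-1}$ and invoke transversality at $p_0$, whereas the paper pushes forward and invokes transversality of $X_{\phi_{t_0}(p_0)}$ to $(d\phi_{t_0})_{p_0}(T_{p_0}\Sigma^{n-1})$.
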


\begin{proof}
Since each $\phi_t$ is a diffeomorphism on $M^n$,
the restriction of $\phi_t$ to $\Sigma^{n-1}$ 
is an immersion for each $t$.
In particular, the rank of
Jacobi matrix of $\Phi$ is greater than or equal
to $n-1$ at $p$. 
Since $X$ is transversal to $\Sigma^{n-1}$,
the fact that $\phi_t\colon M^n\to M^n$
is a diffeomorphism yields that
$$
X_{\phi_t(p)}=\left.\frac{d\phi_{t+s}(p)}{ds}\right|_{s=0}=(d\phi_t)_p(X_p)
$$
is transversal to $\phi_t(\Sigma^{n-1})$.
Thus, the rank of Jacobi matrix of $\Phi$ must be equal
to $n$ at $(t,p)$.
\end{proof}

\begin{acknowledgement}
The authors thank Professors 
Shyuichi Izumiya and Alfonso Romero
for valuable information.
\end{acknowledgement}


\begin{thebibliography}{00}

\bibitem{AUY}
S. Akamine, M. Umehara and K. Yamada,
{\it Improvement of the Bernstein-type theorem for 
space-like zero mean curvature
graphs in Lorentz-Minkowski space
using fluid mechanical duality}, 
to appear in Proc. Amer. Math. Soc. Ser. B
(arXiv:1904.08046).


\bibitem{AHUY}
S. Akamine, A. Honda, M. Umehara and K. Yamada, 
{\it Bernstein-type theorem 
for zero mean curvature hypersurfaces without time-like points in 
Lorentz-Minkowski space}, preprint (arXiv:1907.01754).

\bibitem{BFL}
A.~Bejancu,
A.~Ferr\'{a}ndez and P.~ Lucas, 
{\it A new viewpoint on geometry of a lightlike hypersurface in 
a semi-Euclidean space}, Saitama Math. J. {\bf 16} (1998), 31--38.


\bibitem{DG}
K.~L.~Duggal and A.~Gimenez,
{\it Lightlike hypersurfaces of Lorentzian manifolds
with distinguished screen},
Journal of Geometry and Physics {\bf 55} (2005), 107--122.

\bibitem{Ga2}
G.J.~Galloway, 
{\it Maximum Principles for Null Hypersurfaces
and Null Splitting Theorems},
Ann. Henri Poincar\'e {\bf 1} (2000), 543--567.


\bibitem{Ga}
G.J.~Galloway, {\it Null Geometry and the Einstein Equations},
Null Geometry and the Einstein Equations. In: P.T.~Chru\'sciel and 
H.~Friedrich (eds), The Einstein Equations and the 
Large Scale Behavior 
of Gravitational Fields. Birkh\"auser, Basel (2004).


\bibitem{GO}
M.~Gutierrez and B.~Olea, {\it Induced Riemannian structures on
null hypersurfaces}, Math. Nachr.  {\bf 289}  (2016), 1219--1236.

\bibitem{K}
D.~N.~Kupeli,
{\it Degenerate submanifolds in Semi-Riemannian geometry},
Geometriae Dedicata {\bf 24} (1987), 337-0361


\bibitem{L}
Jeffrey M. Lee,
Manifolds and Differential Geometry,
Graduate Studies in Mathematics {\bf 107},
American Mathematical Society,
{\rm 2009}.

\bibitem{MU}
S. Murata and M. Umehara,
{\it Flat surfaces with singularities in Euclidean 3-space}, 
Journal of Differential Geometry {\bf 82} (2009), 279--316.


\bibitem{ON}
B.~O'Neill,
Semi-Riemannian Geometry, Academic Press 1983, USA.

\bibitem{NO}
K. Nomizu and  H. Ozeki, {\it 
The existence of complete Riemannian metrics}, Proc. Amer.
Math. Soc. {\bf 12}  (1961), 889--891.


\bibitem{UY}
 M. Umehara and K. Yamada, {\it 
  Hypersurfaces with light-like points 
in a Lorentzian manifold}, 
J.  Geom.  Anal. {\bf 29} (2019), 3405--3437.
\end{thebibliography}
\end{document}